\title{\textsc{Local optimality of\\ Zaks-Perles-Wills simplices}}
\author{Gennadiy Averkov\footnote{Faculty of Mathematics, Otto-von-Guericke-Universit\"at Magdeburg, Universit\"atsplatz 2, 39106 Magdeburg, Germany. Email: averkov@ovgu.de}}
\newcommand{\rmcmd}[1]{\mathop{\mathrm{#1}}\nolimits}
\newcommand{\aff}{\rmcmd{aff}}
\newcommand{\lin}{\rmcmd{lin}}
\newcommand{\R}{\mathbb{R}}
\newcommand{\Z}{\mathbb{Z}}
\newcommand{\cP}{\mathcal{P}}
\newcommand{\cS}{\mathcal{S}}
\newcommand{\setcond}[2]{\left\{#1 \, : \, #2 \right\}}
\newcommand{\vol}{\rmcmd{vol}}
\newcommand{\conv}{\operatorname{conv}}
\newcommand{\intr}[1]{\operatorname{int}(#1)}
\newcommand{\relintr}[1]{\operatorname{relint}(#1)}
\newcommand{\slice}{\operatorname{slice}}
\newtheorem{nn}{}
\newtheorem{theorem}[nn]{Theorem}
\newtheorem{conjecture}[nn]{Conjecture}
\newtheorem*{acknowledgments*}{Acknowledgments}
\begin{document}

\maketitle

\begin{abstract}	
	In 1982, Zaks, Perles and Wills discovered a $d$-dimensional lattice simplex $S_{d,k}$ with $k$ interior lattice points, whose volume is linear in $k$ and doubly exponential in the dimension $d$. It is conjectured that, for all $d \ge 3$ and $k \ge 1$, the simplex $S_{d,k}$ is a volume maximizer in the family $\cP^d(k)$ of all $d$-dimensional lattice polytopes with $k$ interior lattice points. To obtain a partial confirmation of this conjecture, one can try to verify it for a subfamily of $\cP^d(k)$ that naturally contains $S_{d,k}$ as one of the members. Currently, one does not even know whether $S_{d,k}$ is optimal within the family $\cS^d(k)$ of all $d$-dimensional lattice simplices with $k$ interior lattice points. In view of this, it makes sense to look at even narrower families, for example, some subfamilies of $\cS^d(k)$. The simplex $S_{d,k}$ of Zaks, Perles and Wills has a facet with only one lattice point in the relative interior. We show that $S_{d,k}$ is a volume maximizer in the family of simplices $S \in \cS^d(k)$ that have a facet with one lattice point in its relative interior. We also show that, in the above family, the volume maximizer is unique up to unimodular transformations.
\end{abstract}

\section{Introduction}

	Let $d$ be a positive integer and $k$ a non-negative integer,
	let $o$ denote the origin and $e_1,\ldots,e_d$ the standard basis of the space $\R^d$. 
	A \emph{lattice polytope} in $\R^d$ is a polytope whose all vertices belong to the integer lattice $\Z^d$; see also \cite{Barvinok1997,GritzmannWills1993,Gruber2007,Barvinok2008} for background information. By $\vol$ we denote the $d$-dimensional volume (i.e., the Lebesgue measure) in the space $\R^d$, scaled in the usual way so that the unit cube $[0,1]^d$ has volume one. We call a map $\phi : \R^d \to \R^d$ a \emph{unimodular transformation} if $\phi$ is an affine transformation satisfying $\phi(\Z^d) = \Z^d$. We study the relationship between the volume and the number of interior lattice points for lattice polytopes. Both these functionals are invariant under unimodular transformations.

	Let $\cP^d(k)$ denote the family of all $d$-dimensional lattice polytopes in $\R^d$ with $k$ interior lattice points and $\cS^d(k)$ the family of all simplices belonging to $\cP^d(k)$. For $d=1$, up to unimodular transformations, the segment $[0,k+1]$ is the only member of $\cP^d(k)$ and $\cS^d(k)$, but for larger dimensions $d\ge 2$, $\cP^d(k)$ and $\cS^d(k)$ contain many different polytopes. 
	It is known that, for every $k \ge 1$, the volume of polytopes in $\cP^d(k)$ is bounded; see \cite{Hensley83}. The assumption $k \ge 1$ is necessary for boundedness, as for $d \ge 2$ and $k=0$, the volume of polytopes in $\cP^d(k)$ and $\cS^d(k)$ is unbounded; for example, the horizontal slab $\R^{d-1} \times \R$ contains lattice polytopes of arbitrarily large volume. In the last four decades, many researches tried to determine possibly tight volume bounds for the families $\cP^d(k)$ and $\cS^d(k)$ and their subfamilies; see \cite{Scott76,Hensley83,Pikhurko01,Conrads02,LagariasZiegler91,Nill07,Kasprzyk09,Averkov12,AverkovKruemplmannNill15,arXiv:BallettiKasrpzyk16,arXiv:BallettiKasprzykNill16,AKN2017}. Despite the constant progress, up to now, sharp volume bounds in $\cP^d(k)$ and $\cS^d(k)$ are known in just a few special cases.

	Volume bounds for $\cP^d(k)$ and its subfamilies have various applications. Such bounds were used in  \cite{AverkovWagnerWeismantel11,ACDDF13,CDDFG15} in the context of integer optimization. In the theory of toric varieties, volume and the number of interior lattice points of a polytopes are endowed with an an algebraico-geometric meaning; see \cite{Fulton1993,CoxLittleSchenck2011}. The number of interior lattice points and the volume are a part of the information provided by the Ehrhart polynomial of a lattice polytope; see \cite[\S19.1]{Gruber2007}. Hence, for understanding the structure of Ehrhart polynomials of general lattice polytopes, it is also necessary to understand the relationship between the volume and the number of interior lattice points. 

	In 1982, Zaks, Perles and Wills \cite{ZaksPerlesWills82} discovered the simplex 
	\[
		S_{d,k} := \conv \bigl(o,s_1 e_1, \ldots, s_{d-1} e_{d-1}, (k+1) (s_d-1) e_d \bigr),
	\] 
	derived from the so-called \emph{Sylvester sequence}, which is defined recursively by 
	\begin{equation}
		s_i:= \begin{cases}
			2 & \text{if} \ i=1,
			\\ 1 + s_1 \cdots s_{i-1} & \text{if} \ i \ge 2.
		\end{cases}
	\end{equation}
	
	We call $S_{d,k}$ the \emph{Zaks-Perles-Wills simplex}. The original definition from \cite{ZaksPerlesWills82} is restricted to the case $k \ge 1$, but we also include the case $k=0$, which is also interesting; see \cite[Remark~3.10]{AverkovWagnerWeismantel11} and Conjecture~\ref{conj:k=0} at the end of this paper. The simplex $S_{d,k}$ has $k$ interior lattice points and its volume 
	\[
		\vol(S_{d,k}) = \frac{1}{d!} (k+1) (s_d-1)^2
	\]
	is doubly exponential in the dimension. Recently, the following conjecture about the maximum volume in $\cP^d(k)$ was formulated.

\begin{conjecture}[Balletti \& Kasprzyk \cite{arXiv:BallettiKasrpzyk16}]
	\label{conj} Let $d \ge 3$ and $k \ge 1$. Then $S_{d,k}$ is a volume maximizer in $\cP^d(k)$. Furthermore, with the exception of the case $d=3, \, k=1$, the volume maximizer in $\cP^d(k)$ is unique up to unimodular transformations.
\end{conjecture}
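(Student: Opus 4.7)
The conjecture is a major open problem and, as the authors point out, even its restriction to $\cS^d(k)$ is currently inaccessible; my plan therefore targets a strategy rather than a full proof. The guiding idea is a dimensional induction on $d$ that mirrors the recursion $s_i = 1 + s_1 \cdots s_{i-1}$ of the Sylvester sequence: the recursive ``multiply and add one'' structure of $s_i$ should reflect itself as an inductive ``volume accumulates across a stack of slices'' structure on $S_{d,k}$.

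The first task is to reduce from $\cP^d(k)$ to $\cS^d(k)$ by showing that any volume maximizer is in fact a simplex. A natural approach is the following: if $P \in \cP^d(k)$ has more than $d+1$ vertices, take a lattice triangulation of $P$; by ``pushing'' one vertex of a non-simplicial part of $P$ outward while compensating inward elsewhere, one should be able to strictly increase $\vol(P)$ without changing the number of interior lattice points. Whether such a volume-increasing deformation is always available is itself a delicate combinatorial issue, and this is one of the places where the known partial results have so far failed.

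For a simplex $S \in \cS^d(k)$, I would distinguish a facet $F$, place it on the hyperplane $\{x_d = 0\}$ via a unimodular transformation, and study the stack of slices $S \cap \{x_d = j\}$ using the decomposition $\vol(S) = \tfrac{1}{d} h \, \vol_{d-1}(F)$, where $h$ is the lattice height of the apex over $\aff(F)$. Applying the induction hypothesis to $(d-1)$-dimensional lattice polytopes derived from these slices ought to yield a bound on $\vol_{d-1}(F)$, and combining it with a Sylvester-type bound on $h$ would then recover the value $\tfrac{1}{d!}(k+1)(s_d-1)^2$. Tracking the equality case through the induction would produce the uniqueness statement away from the exceptional pair $d=3,\, k=1$.

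The central difficulty, and the reason the conjecture has resisted proof, is that interior lattice points of $S$ are not in bijection with interior lattice points of its slices: a slice $S \cap \{x_d = j\}$ can contain lattice points whose status as interior or boundary points of $S$ is governed by the $d$-dimensional geometry of $S$, not by the $(d-1)$-dimensional geometry of the slice. Consequently, a straightforward application of the inductive hypothesis at the slice level does not close the argument, and one is forced to track a finer invariant than the single number $k$. This is precisely why the present paper restricts attention to simplices that possess a facet with a single relative-interior lattice point: such a facet supplies exactly the extra bookkeeping needed to drive a clean induction. Extending this from the restricted family to arbitrary simplices, and then to general polytopes in $\cP^d(k)$, is exactly the step at which a genuinely new idea seems to be required, and it is the main obstacle to settling the conjecture in full generality.
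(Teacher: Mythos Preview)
The statement you address is a \emph{conjecture}; the paper does not prove it and explicitly presents it as open. There is therefore no proof in the paper to compare your proposal against. You recognize this and offer only a heuristic outline, so the honest assessment is that your write-up is not a proof and does not claim to be one.

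That said, the strategy has concrete weak points worth naming. The reduction from $\cP^d(k)$ to $\cS^d(k)$ by a ``push one vertex out, compensate elsewhere'' deformation is not known to work: such moves generically change the set of interior lattice points, and no mechanism is given that guarantees a volume-increasing lattice deformation preserving $k$. This step is itself an open problem, not a routine lemma. Your slice-based induction then runs into exactly the obstacle you yourself describe, so the outline amounts to restating the difficulty rather than overcoming it.

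For context, the paper's partial result (optimality of $S_{d,k}$ within $\cS^d(k,1)$) is \emph{not} obtained by dimensional induction on slices in the way you propose. The volume bound comes from applying van der Corput's inequality to an $o$-symmetric box built from the barycentric coordinates of the distinguished relative-interior point of the facet, combined with the sharp lower bound on the product of those coordinates from Theorem~\ref{productBar}. Slicing appears only afterwards, in the uniqueness analysis, once the optimal volume is already known. So even in the restricted setting where the paper succeeds, the mechanism is different from the inductive scheme you sketch, and your outline does not suggest a route to the full conjecture that goes beyond what is already understood.
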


Balletti and Kasprzyk \cite{arXiv:BallettiKasrpzyk16} point out that hints to Conjecture~\ref{conj} can also be found in older literature  \cite{Hensley83,ZaksPerlesWills82,LagariasZiegler91}. We give a short summary of the current knowledge of volume bounds for $\cP^d(k)$ and $\cS^d(k)$, with $k \ge 1$. In 1976, Scott \cite{Scott76}	determined the sharp volume bound in $\cP^2(k)$ and $\cS^2(k)$. Volume maximizers in $\cP^2(k)$ and $\cS^2(k)$ can deduced from refinements of Scott's result \cite{Scott76} presented in \cite{HaaseSchicho09}; see also Figure~\ref{fig:d=3:k=2} for an example in the case $k=2$.  Conjecture~\ref{conj} was verified by complete enumeration of $\cP^d(k)$ in the cases $d=3,\,k=1$ and $d=3,\, k=2$ in \cite{Kasprzyk09}  and \cite{arXiv:BallettiKasrpzyk16}, respectively. The currently best upper volume bound $(d(2d+1)(s_{2d+1}-1))^d k$ for the whole family $\cP^d(k)$ is much larger than $\vol(S_{d,k})$; see \cite{AKN2017}. In contrast to this, for the family $\cS^d(k)$, the best currently known bound $\frac{1}{d!} (d+1) (s_d-1)^2 k$ on the volume  differs only by a linear factor in $d$ from the conjectured bound $\vol(S_{d,k})$; see \cite{AKN2017}. It known that $S_{d,k}$ is a unique volume maximizer in $\cS^d(k)$ for every $d \ge 4$ and $k=1$; see \cite{AverkovKruemplmannNill15}.

\begin{figure}
	\begin{center}
		\begin{tikzpicture}[scale=0.3]
		\filldraw[fill=green!30!white] (0,0) -- (0,6) -- (2,0) -- cycle;
		\filldraw[fill=green!30!white] (4,0) -- (4,5) -- (4+2,1) -- (4+2,0) -- cycle;
		\filldraw[fill=green!30!white] (8,0) -- (8,4) -- (8+2,2) -- (8+2,0) -- cycle;
		\filldraw[fill=green!30!white] (12,0) -- (12,3) -- (12+2,3) -- (12+2,0) -- cycle;
		\foreach \x in {-1,...,15}
		\foreach \y in {-1,...,7}
		{
			\fill (\x,\y) circle (0.1);
		}
		\end{tikzpicture}
		\caption{\label{fig:d=3:k=2}. All volume maximizers in $\cP^2(2)$}
	\end{center}
\end{figure}

For $l \ge 0$, we introduce the subfamily $\cS^d(k,l)$ of $\cS^d(k)$ consisting of simplices that have a facet with exactly $l$ lattice points in its relative interior.
Our main result verifies the optimality of $S_{d,k}$ in the family $\cS^d(k,1)$. This family naturally includes $S_{d,k}$, because the point $(1,\ldots,1,0)$ is a unique lattice point in the relative interior of the facet $\conv(o,s_1 e_1,\ldots,s_{d-1} e_{d-1})$ of $S_{d,k}$. To show the optimality, we use the following auxiliary result:

\begin{theorem}
	\label{tool}
	Let $S$ be a $d$-dimensional simplex in $\R^d$ (not necessarily a lattice simplex) with $k \ge 0$ interior lattice points. Assume that $S$ has a facet $F$ with a unique lattice point $x$ in its relative interior. Let $\beta_1, \ldots, \beta_d > 0 $ be the barycentric coordinates of $x$ with respect to $F$. Then the following hold:
	\begin{enumerate}[(a)]
		\item \label{tool:bound} The volume of $S$ is bounded by 
		\begin{equation}
			\vol(S) \le \frac{k+1}{d! \, \beta_1 \cdots \beta_d}.
		\label{mod:pik:ineq}
		\end{equation}
		\item \label{tool:eqcase} If $k \ge 1$ and \eqref{mod:pik:ineq} is attained with equality, then:
		\begin{enumerate}[1.]
			\item \label{collinear} The point $x$ and the $k$ interior lattice points of $S$ are collinear. That is, there exists a line $g$ that contains $x$ and all interior lattice points of $S$.
			\item \label{edge} The simplex $S$ has an edge parallel to the line $g$.
		\end{enumerate}
	\end{enumerate}
\end{theorem}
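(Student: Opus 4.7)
The plan is to reduce~\eqref{mod:pik:ineq} to a statement about a natural parallelepiped inscribed in $S$, and then establish that cleaner inequality by a Blichfeldt-type argument that converts excess volume around $x$ into surplus interior lattice points.

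First I would translate so that $v_0 = o$, write the vertices of $F$ as $w_1,\ldots,w_d$ (so $x = \sum_{i=1}^d \beta_i w_i$), set $z_i := \beta_i w_i$, and introduce the parallelepiped
\[
\Pi := \Bigl\{\sum_{i=1}^d s_i z_i : s_i \in [0,1]\Bigr\}.
\]
A direct check shows $\Pi \subseteq S$ (the coefficients $s_i \beta_i$ of the expansion in the $w_i$ are nonnegative and sum to at most $\sum_i \beta_i = 1$), its opposite vertices are $o$ and $\sum_i z_i = x$, and its volume equals $|\det(z_1,\ldots,z_d)| = \bigl(\prod_i \beta_i\bigr)\, d!\, \vol(S)$. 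Hence part~\ref{tool:bound} is equivalent to the cleaner inequality $\vol(\Pi) \le k + 1$.

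To prove this I would argue by contradiction. Assuming $\vol(\Pi) > k + 1$, Blichfeldt's theorem applied to $\Pi^\circ$ yields $k+2$ distinct points $p_0,\ldots,p_{k+1} \in \Pi^\circ$ in a common coset of $\Z^d$. For any $\xi = \sum_i \tau_i z_i \in \Pi^\circ - \Pi^\circ$ with $\tau_i \in (-1,1)$, a direct computation gives $x + \xi = \sum_i \beta_i(1+\tau_i)\, w_i$, whose coefficients are strictly positive and sum to $1 + \sigma(\xi)$, where $\sigma(\xi) := \sum_i \tau_i \beta_i$. So whenever $\xi \in \Z^d$ and $\sigma(\xi) < 0$, the point $x + \xi$ lies in $\intr(S) \cap \Z^d$. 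Indexing so that $p_0$ maximises $\sigma$ among the $p_j$ makes $\sigma(\xi_i) \le 0$ for $\xi_i := p_i - p_0$ ($i \ge 1$); when the values $\sigma(p_j)$ are pairwise distinct, the $k+1$ points $x + \xi_i$ are distinct elements of $\intr(S) \cap \Z^d$, contradicting the hypothesis. I expect the main obstacle to be handling ties in $\sigma$: indices $i$ with $\sigma(\xi_i) = 0$ give $x + \xi_i \in \aff(F) \cap S = F$, and by uniqueness of $x$ in $\relintr{F} \cap \Z^d$ this forces $x + \xi_i \in \bd{F}$. I would close this case by recursing on the lower-dimensional slice $\Pi^\circ \cap \{\sigma = \max\}$, treating the tied $p_j$ as a Blichfeldt configuration along a face of $F$, and iterating until a contradictory interior lattice point is produced.

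For part~\ref{tool:eqcase}, equality $\vol(\Pi) = k+1$ saturates the above argument: the $k+1$ lattice vectors $\xi_i$ that recover the $k$ interior lattice points of $S$ (together with $x$) are forced into a single line through the origin, parallel to one of the generators $z_j$ of $\Pi$. Pulled back, this produces a line $g$ through $x$ carrying all interior lattice points of $S$, proving~\ref{collinear}; and since $z_j = \beta_j(w_j - v_0)$ is parallel to the edge $[v_0, w_j]$ of $S$, the line $g$ is parallel to this edge, proving~\ref{edge}.
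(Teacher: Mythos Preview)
Your approach to~\eqref{tool:bound} is essentially the paper's in different packaging: the paper applies van der Corput's inequality to the $o$-symmetric parallelepiped $B = \Pi - \Pi$ (after a linear change of coordinates sending $S$ to the standard simplex), and then bounds $|\Z^d \cap \intr{B}| \le 2k+1$ by exactly your sign-of-$\sigma$ trick, splitting according to the hyperplane $\{\sigma = 0\}$. Your direct Blichfeldt formulation on $\Pi^\circ$ is an equivalent route. One correction that actually simplifies your plan: the tie case is not an obstacle but an immediate contradiction. When $\sigma(\xi_i)=0$, each $\tau_j$ lies strictly in $(-1,1)$, so every coefficient $\beta_j(1+\tau_j)$ is strictly positive and they sum to $1$; hence $x+\xi_i \in \relintr{F}$, not merely $\bd{F}$. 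The uniqueness hypothesis then forces $x+\xi_i = x$, i.e.\ $\xi_i = 0$, contradicting $p_i \ne p_0$. No recursion to lower-dimensional faces is needed, and part~\eqref{tool:bound} is complete.

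Part~\eqref{tool:eqcase}, however, has a genuine gap. Your argument for~\eqref{tool:bound} is a proof by contradiction from the \emph{strict} hypothesis $\vol(\Pi) > k+1$; when $\vol(\Pi) = k+1$ exactly, Blichfeldt's theorem does not hand you $k+2$ congruent points in $\Pi^\circ$, so there are no vectors $\xi_i$ to speak of and ``saturating the argument'' has no literal content. A separate equality analysis is required. The paper obtains it precisely because of the van der Corput packaging: equality in~\eqref{mod:pik:ineq} forces equality in van der Corput's inequality for $B$, and the paper invokes an external structural characterisation of that equality case (Theorem~\ref{vdc:eqcase}), which says that $\lin(\Z^d \cap \intr{B})$ is a line and $B$ has an edge parallel to it. Since the edges of the parallelepiped $B$ are parallel to $z_1,\ldots,z_d$, this line is $\lin(z_j)$ for some $j$; combined with the fact that the injection $\xi \mapsto x+\xi$ must now be a bijection onto $\Z^d \cap \intr{S}$, both conclusions of~\eqref{tool:eqcase} follow. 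Your sketch correctly anticipates the \emph{shape} of the answer but does not supply the substantive equality step; to make your route work you would need either to cite such a characterisation or to prove one for the Blichfeldt setting.
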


Using Theorem~\ref{tool} we obtain

\begin{theorem}[Main result]
	\label{main}
	Let $d \ge 2$ and $k \ge 0$. Then, up to unimodular transformations, $S_{d,k}$ is a unique volume maximizer in $\cS^d(k,1)$.
\end{theorem}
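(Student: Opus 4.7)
The plan is to apply Theorem~\ref{tool} to the designated facet $F$ of $S$ and to reduce the statement to a combinatorial inequality for $F$ that can be attacked with classical Egyptian-fraction arguments.

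Given $S \in \cS^d(k,1)$, let $F$ be a facet of $S$ whose relative interior contains a unique lattice point $x$, and let $\beta_1,\ldots,\beta_d > 0$ be the barycentric coordinates of $x$ with respect to the vertices $v_1,\ldots,v_d$ of $F$. Part~(a) of Theorem~\ref{tool} gives $\vol(S) \le (k+1)/(d!\,\beta_1 \cdots \beta_d)$, and since $\vol(S_{d,k}) = (k+1)(s_d-1)^2/d!$, the bound $\vol(S) \le \vol(S_{d,k})$ reduces to the key inequality on $F$:
\[
	\beta_1 \cdots \beta_d \ge \frac{1}{(s_d-1)^2},
\]
with equality characterising the Zaks--Perles--Wills facet $\conv(o, s_1 e_1, \ldots, s_{d-1} e_{d-1})$ up to unimodular transformations.

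To establish the key inequality, for each $i$ I would introduce the primitive $\Z$-linear functional $\ell_i$ on $\aff(F)\cap\Z^d$ that vanishes on the facet of $F$ opposite $v_i$ and is positive on $v_i$, and set $w_i := \ell_i(v_i) \in \Z_{>0}$. Then $\beta_i = \ell_i(x)/w_i$ with $\ell_i(x) \in \Z_{>0}$, so $\sum 1/w_i \le \sum \beta_i = 1$ with equality iff $\ell_i(x)=1$ for every $i$. The critical step is to use the uniqueness of $x$ in $\relintr{F}$ to force $\ell_i(x)=1$ for every $i$: otherwise, for some $j$, the lattice slice $F \cap \{\ell_j = 1\}$ is a nontrivial homothetic copy of the facet of $F$ opposite $v_j$, and one must argue that it contains a lattice point lying in the relative interior of $F$, contradicting the uniqueness of $x$. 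Once $\ell_i(x)=1$ is known for all $i$, the key inequality reduces to the classical Curtiss-type result $\prod w_i \le (s_d-1)^2$ for positive integers $w_1,\ldots,w_d$ with $\sum 1/w_i = 1$, whose unique maximising multiset $\{s_1,\ldots,s_{d-1},s_d-1\}$ corresponds geometrically to the ZPW facet.

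For the uniqueness assertion I would assume $\vol(S) = \vol(S_{d,k})$ and exploit the equality data. Equality in the key inequality forces $F$ to be unimodularly the ZPW facet. Part~(b) of Theorem~\ref{tool} then places the $k$ interior lattice points of $S$ together with $x$ on a common line $g$ parallel to some edge $e$ of $S$; since $g$ is not contained in $\aff(F)$, the edge $e$ joins some vertex $v_i$ of $F$ to the apex $v$ of $S$. The direction of $g$ together with the lattice length of $e$ (governed by $k$) pins $v$ down up to the lattice automorphism group of the ZPW facet, and a short case check shows that all resulting simplices $S$ are unimodularly equivalent to $S_{d,k}$. The main obstacle I foresee is the crucial reflexivity-type step $\ell_i(x)=1$: ruling out lattice points in the slice $\{\ell_j = 1\} \cap F$ requires a careful geometric argument, while the Egyptian-fraction optimisation and the reconstruction of $S$ from the equality data are comparatively routine once this lemma is in hand.
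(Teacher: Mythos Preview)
Your overall architecture matches the paper's exactly: apply Theorem~\ref{tool}\eqref{tool:bound} to reduce to the inequality $\beta_1\cdots\beta_d\ge 1/(s_d-1)^2$ for the facet $F\in\cS^{d-1}(1)$, and then use Theorem~\ref{tool}\eqref{tool:eqcase} together with the equality characterisation of that inequality to pin down $S$. The paper does not re-prove the key inequality; it invokes Theorem~\ref{productBar} (from \cite{AverkovKruemplmannNill15}) as a black box, both for the bound and for the uniqueness of $T_{d-1}$.

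There is a genuine gap in your plan, namely the ``reflexivity-type step'' $\ell_i(x)=1$. This assertion is simply false for $(d-1)\ge 3$. A concrete counterexample is
\[
F=\conv(o,\,2e_1,\,3e_2,\,11e_3)\in\cS^3(1),
\]
whose unique interior lattice point is $(1,1,1)$. The facet opposite $o$ lies on $33x+22y+6z=66$ (primitive normal), and $(1,1,1)$ has lattice distance $5$ from it, so $\ell_o(x)=5$. Moreover, your proposed mechanism for forcing $\ell_j(x)=1$ fails here: the slice $F\cap\{\ell_o=1\}=\{33x+22y+6z=65\}\cap F$ contains no lattice point with all coordinates positive (check $x=y=1$ gives $6z=10$), hence no second interior lattice point arises. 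So the reduction to an Egyptian-fraction problem with $\sum 1/w_i=1$ does not go through, and Theorem~\ref{productBar} cannot be recovered along this route. The proof of Theorem~\ref{productBar} in \cite{AverkovKruemplmannNill15} proceeds differently and is substantially more delicate than a Curtiss-type optimisation over unit fractions.

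Two smaller issues in the uniqueness half: Theorem~\ref{tool}\eqref{tool:eqcase} is only available for $k\ge 1$, so the case $k=0$ needs a separate argument (the paper handles it via a cube-in-slice computation). And the ``short case check'' is not merely an identification up to the symmetry group of $T_{d-1}$: placing the apex over a vertex $s_ie_i$ with $i\ge 1$ (for $d\ge 3$) does \emph{not} yield a unimodular image of $S_{d,k}$ but rather a simplex with too many interior lattice points; the paper verifies this by exhibiting an extra interior lattice point at height $k+1$.
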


We remark that due to restriction to $\cS^d(k,1)$, it was possible not to exclude the cases $k=0$ and $d=2$. The family $\cS^d(k,1)$ seems to be a natural `neighborhood' of $S_{d,k}$, within which the simplex $S_{d,k}$ is optimal without exceptions. The known exceptional cases for $\cP^d(k)$ are as follows: the volume maximizer in $\cP^2(1)$ is not $S_{2,1}$ but the triangle $\conv(o,3 e_1, 3 e_2)$, while the tetrahedron $S_{3,1}$ is a volume maximizer in $\cP^3(1)$ but not a unique one, the tetrahedron $\conv(o, 2 e_1, 6 e_2, 6 e_3)$ being the other one; see Figure~\ref{fig:dim:3}. If Conjecture~\ref{tool} is true, there are no further exceptions.

\begin{figure}
	\begin{center}
	\includegraphics[height=18mm]{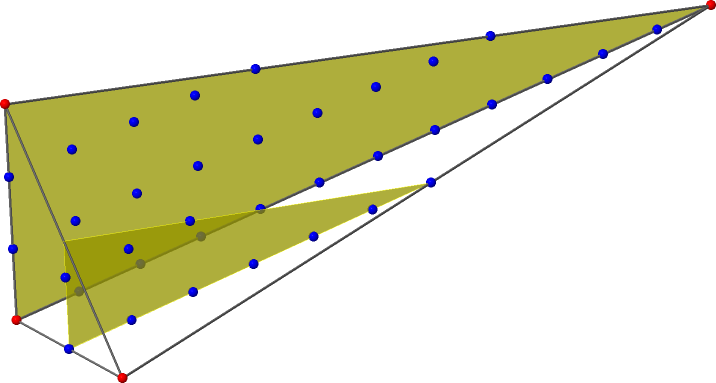}
	\hspace{3em}
	\includegraphics[height=43mm]{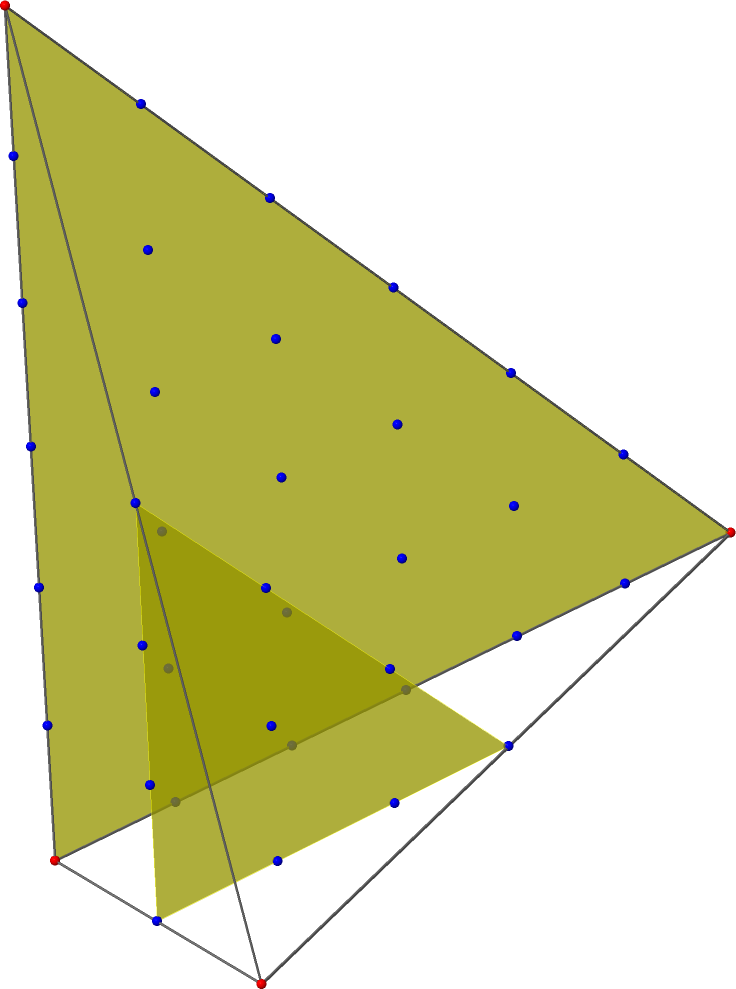}
	\end{center}
	\caption{\label{fig:dim:3} Tetrahedra $S_{3,1} = \conv(o,2 e_1, 3 e_2, 12 e_3)$ and $\conv(o,2 e_1, 6 e_2, 6 e_3)$ are the two volume maximizers in $\cP^3(1)$}
\end{figure}

We give a short outline of our proof approach. Theorem~\ref{tool}\eqref{tool:bound} is a modification of the following result of Pikhurko:

\begin{theorem}[Pikhurko's bound \cite{Pikhurko01}]
	\label{pikhurko}
		Let $S$ be a $d$-dimensional simplex in $\R^d$ (not necessarily a lattice simplex) with $k \ge 1$ interior lattice points. Let $x$ be an interior lattice point of $S$ and let $\beta_1 \ge \ldots \ge \beta_{d+1}> 0$ be the barycentric coordinates of $x$ with respect to $S$, ordered descendingly. Then the volume of $S$ is bounded by 
	\begin{equation}
		\label{pikhurko:bound}
		\vol(S) \le \frac{k}{d! \, \beta_1 \cdots \beta_d}.
	\end{equation}
\end{theorem}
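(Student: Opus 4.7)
After translating so that $x$ sits at the origin, the barycentric identity reads $\sum_{i=1}^{d+1} \beta_i v_i = 0$, and $\beta_{d+1}$ is the smallest coordinate. The plan is to recast the bound as a statement about a specific parallelepiped and then control its lattice points. I would consider
\[
	P := \Bigl\{\, v_{d+1} + \sum_{i=1}^{d} t_i \beta_i (v_i - v_{d+1}) \, : \, 0 \le t_i \le 1 \,\Bigr\}.
\]
Expressing a generic point of $P$ in barycentric coordinates of $S$ yields the coefficients $(t_1 \beta_1, \ldots, t_d \beta_d,\; 1 - \sum_{i=1}^d t_i \beta_i)$; these are non-negative (since $\sum_{i=1}^d t_i \beta_i \le \sum_{i=1}^d \beta_i = 1 - \beta_{d+1}$) and sum to $1$, so $P \subset S$ and $\intr P \subset \intr S$. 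The corner $t_i \equiv 1$ is $\sum_{i=1}^{d+1} \beta_i v_i = x$, so $x$ is a lattice vertex of $P$ opposite $v_{d+1}$. Finally, $\vol(P) = \beta_1 \cdots \beta_d \cdot d!\,\vol(S)$, so the target inequality \eqref{pikhurko:bound} is precisely $\vol(P) \le k$.

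The remaining task is to show $\vol(P) \le k$ by a lattice point argument anchored at the lattice vertex $x$. The heuristic is that, if $\vol(P) > k$, a Blichfeldt/Minkowski type argument should produce $k+1$ lattice points inside $P$, from which the $k$ points other than $x$ lie in $\intr P \subset \intr S$ and are distinct from $x$, giving $k+1$ interior lattice points of $S$ and contradicting $|\intr S \cap \Z^d| = k$. An alternative route is to first treat the case $k=1$ by Minkowski's theorem applied to the centrally symmetric body $P - P$ (yielding the Hensley--Lagarias--Ziegler bound $\vol(S) \le 1/(d!\,\beta_1 \cdots \beta_d)$ when $x$ is the unique interior lattice point), and then to pass to the general case by a homothety: shrink $S$ around $x$ to the largest scale $t^\ast$ for which $\intr(t^\ast S)$ still contains only $x$, apply the $k=1$ bound to $t^\ast S$ (whose barycentric coordinates of $x$ agree with $\beta_1,\ldots,\beta_{d+1}$), and combine it with a lattice-packing inequality of the form $(t^\ast)^d \ge 1/k$.

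The step I expect to be the main obstacle is converting the volume lower bound $\vol(P) > k$ into a genuine lower bound $|P \cap \Z^d| \ge k+1$, or equivalently proving the packing inequality $(t^\ast)^d \ge 1/k$ in the alternative route. Blichfeldt's theorem only yields $k+1$ points in $P$ with pairwise integer differences, not $k+1$ honest lattice points; the presence of the lattice corner $x$ must be exploited to pin down the relevant coset. A plausible workaround is to replace $P$ by a half-open variant $P^\circ$ anchored at $x$ whose $\Z^d$-translates tile $\R^d$ cleanly, and then count via coset representatives; another is induction on $d$, slicing $S$ by a hyperplane parallel to the facet opposite $v_{d+1}$ and applying the $(d{-}1)$-dimensional statement. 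The choice of $v_{d+1}$ as the vertex with smallest barycentric weight is what makes this reduction efficient, because the induced barycentric coordinates on the slice are only mildly distorted.
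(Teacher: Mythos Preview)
The paper does not prove Pikhurko's bound directly (it is cited from \cite{Pikhurko01}), but its proof of the closely related Theorem~\ref{tool} makes the intended argument transparent, and your proposal diverges from it at exactly the point you flag as the obstacle.

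Your parallelepiped $P$ is the right object, and the reduction to $\vol(P)\le k$ is correct. The gap is that neither Blichfeldt nor a half-open tiling of $P$ will give you $k+1$ honest lattice points from $\vol(P)>k$; a parallelepiped of volume just above $k$ with one lattice vertex can easily miss all other lattice points. Your alternative homothety route also stalls: the inequality $(t^\ast)^d\ge 1/k$ is simply false in general (shrinking a simplex about an interior lattice point until only that point remains can force $t^\ast$ arbitrarily small, independently of $k$).

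The missing idea, and the one the paper uses, is to \emph{symmetrize} $P$ about $x$ and invoke van der Corput rather than Blichfeldt. Set $B':=(P-x)\cup(x-P)$ extended to the full centred parallelepiped $\{\sum_{i=1}^d s_i\beta_i(v_i-v_{d+1}):|s_i|\le 1\}+x$, so that $\vol(B')=2^d\vol(P)$. Van der Corput's inequality (Theorem~\ref{vdc}) gives $\vol(B')\le(|\Z^d\cap\intr{B'}|+1)2^{d-1}$, and it remains to show $|\Z^d\cap\intr{B'}|\le 2k-1$. Here the symmetry does the work your corner-anchored $P$ could not: nonzero lattice points of $\intr{B'}-x$ come in pairs $\{y,-y\}$, and a one-line barycentric computation shows that for each such pair at least one of $x\pm y$ lies in $\intr{S}\setminus\{x\}$. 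Distinct pairs give distinct interior lattice points, so there are at most $k-1$ pairs, hence at most $2(k-1)+1=2k-1$ lattice points in $\intr{B'}$, and $\vol(P)\le k$ follows. This is precisely the mechanism in the paper's proof of Theorem~\ref{tool}\eqref{tool:bound} (there with $x$ on a facet, yielding $2k+1$ instead of $2k-1$).
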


Our proof of Theorem~\ref{tool}\eqref{tool:bound} adapts the proof of Theorem~\ref{pikhurko}. The basic principle in the proofs of  Theorem~\ref{tool}\eqref{tool:bound} and Theorem~\ref{pikhurko} is to link $\vol(S)$ with a volume of an $o$-symmetric compact convex set $B$ and then invoke well-known van der Corput's theorem (see \cite[\S7.2]{GruberLekkerkerker87}), which bounds $\vol(B)$ using the number of interior lattice points of $B$. Theorem~\ref{tool}\eqref{tool:eqcase} is proved using a characterization of the equality case in van der Corput's inequality obtained in \cite{Averkov18}. 

To prove Theorem~\ref{main}, we use Theorem~\ref{tool} and the following result:

\begin{theorem}[On product of barycentric coordinates for $\cS^d(1)$; \cite{AverkovKruemplmannNill15}]
	\label{productBar}
	Let $\tau(S)$ denote the product of the barycentric coordinates of the unique interior lattice point of $S \in \cS^d(1)$. Then the simplex
	\[
	T_d :=\conv(o,s_1 e_1,\ldots,s_d e_d) \in \cS^d(1)
	\]
	is a minimizer of $\tau(S)$ among all simplices $S \in \cS^d(1)$.  The minimizer $T_d$ is unique up to unimodular transformations. The minimum can be expressed as $\tau(T_d)=\frac{1}{(s_{d+1}-1)^2}$.
\end{theorem}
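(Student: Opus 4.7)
The plan is to prove the statement by induction on $d$, reducing to a classical arithmetic extremal inequality about positive-integer solutions of $\sum 1/a_i=1$. The base case $d=1$ is immediate: up to unimodular equivalence, $\cS^1(1)$ contains only $T_1=[0,2]$, whose interior lattice point has barycentric coordinates $(1/2,1/2)$, giving $\tau(T_1)=1/4=1/(s_2-1)^2$. For the inductive step, take $S\in\cS^d(1)$ and translate so that the interior lattice point is the origin; then $S$ is a Fano lattice simplex with vertices $v_0,\ldots,v_d\in\Z^d$. For each facet $F_i$ opposite $v_i$, let $m_i$ be the primitive outer lattice normal, write $\aff(F_i)=\{x:\sprod{m_i}{x}=k_i\}$ with $k_i\in\Z_{>0}$ (the lattice distance from $o$ to $F_i$), and set $a_i=k_i-\sprod{m_i}{v_i}\in\Z_{>0}$ (the lattice distance from $v_i$ to $\aff(F_i)$). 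Applying $\sprod{m_i}{\cdot}$ to $o=\sum_j\lambda_j v_j$ yields $\lambda_i=k_i/a_i$, so the Fano identity becomes $\sum_i k_i/a_i=1$ and $\tau(S)=\prod_i k_i/a_i$.

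The proof then has two main ingredients. The first is a \emph{classical arithmetic extremal inequality of Sylvester-Curtiss type}: among positive-integer tuples $(a_0,\ldots,a_d)$ with $\sum 1/a_i=1$, the product $\prod a_i$ is maximised, uniquely up to permutation, by $(s_1,s_2,\ldots,s_d,s_{d+1}-1)$, attaining the value $(s_{d+1}-1)^2$. Its proof is by induction on $d$, exploiting the Sylvester recursion $s_{i+1}-1=s_i(s_i-1)$ to peel off the largest denominator. The second ingredient is a \emph{lattice-geometric reduction to the case $k_i=1$ for all $i$}: if some $k_j\ge 2$, then the lattice hyperplane $H_j=\{x:\sprod{m_j}{x}=1\}$ lies strictly between $o$ and $F_j$, meets $\intr{S}$, yet carries no lattice point of $\intr{S}\cap\Z^d$ by uniqueness of $o$. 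Exploiting this rigidity, for instance via a truncation along $H_j$ or an induction on the ``excess'' $\sum_i(k_i-1)$, one shows that every minimiser of $\tau$ must satisfy $k_i=1$ for every $i$.

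Combining the two ingredients gives $\tau(S)\ge 1/(s_{d+1}-1)^2$, with equality only when $k_i=1$ for all $i$ and $(a_0,\ldots,a_d)$ is a permutation of $(s_1,\ldots,s_d,s_{d+1}-1)$. Uniqueness of the minimiser up to unimodular equivalence then follows by reconstructing the simplex from this integer data: in the equality case, the linear equations $\sprod{m_i}{v_j}=1$ for $j\ne i$ and $\sprod{m_i}{v_i}=1-a_i$, together with the primitivity of each $m_i$, force the vertices to form a unimodular image of those of $T_d-(1,\ldots,1)$. The main obstacle is the lattice-geometric reduction to $k_i=1$; while the arithmetic inequality is classical, ruling out minimisers with some facet at lattice distance $\ge 2$ from $o$ requires careful exploitation of the uniqueness of the interior lattice point in order to control what the empty-of-lattice-point intersection $H_j\cap\intr{S}$ can look like, and to ensure that whatever surrogate simplex one constructs in the reduction is itself a lattice Fano simplex.
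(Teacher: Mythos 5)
Theorem~\ref{productBar} is not proved in this paper at all: it is imported verbatim from \cite{AverkovKruemplmannNill15}, so there is no in-paper argument to compare yours against. Judged on its own, your setup is correct and standard: after translating the unique interior lattice point to $o$, the identities $\lambda_i = k_i/a_i$, $\sum_i k_i/a_i = 1$ and $\tau(S) = \prod_i k_i/a_i$ all check out, and the Curtiss/Kellogg-type extremal statement (that $\prod a_i \le (s_{d+1}-1)^2$ for positive integers with $\sum 1/a_i = 1$, with equality only for a permutation of $(s_1,\ldots,s_d,s_{d+1}-1)$) is indeed classical and correctly quoted.

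The genuine gap is the step you yourself flag as ``the main obstacle'': the reduction to $k_i=1$ for all $i$. This is not a technical cleanup --- it is where essentially all of the lattice-geometric content of the theorem lives, and it cannot be dismissed, because the purely arithmetic relaxation of your optimization problem has a strictly smaller minimum than the claimed bound. Already for $d=1$ the data $k=(1,2)$, $a=(3,3)$ satisfies $\sum_i k_i/a_i = 1$ and gives $\prod_i k_i/a_i = 2/9 < 1/4 = 1/(s_2-1)^2$; such configurations are only excluded by the lattice structure, so a proof must convert the emptiness of $\Z^d \cap H_j\cap \intr{S}$ into a quantitative lower bound on $\tau$, and neither of your suggested tactics (truncation along $H_j$, induction on the excess $\sum_i(k_i-1)$) is developed far enough to see that this can be done --- a lattice-point-free slice is a flatness-type condition that does not obviously yield the needed multiplicative inequality. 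A secondary gap: even in the reflexive case, uniqueness of the lattice simplex realizing the weight system $(s_1,\ldots,s_d,s_{d+1}-1)$ is a nontrivial fact (it fails for general weight systems and requires an argument in the spirit of Conrads' classification of reflexive simplices via Hermite normal forms), so the final reconstruction step also needs substance. For reference, the cited proof in \cite{AverkovKruemplmannNill15} avoids the reflexivity reduction altogether and instead derives Sylvester-type lower bounds on the ordered barycentric coordinates directly from the uniqueness of the interior lattice point; as it stands, your outline establishes the result only modulo its hardest ingredient.
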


Theorem~\ref{main} consists of an optimality and a uniqueness assertion. The optimality is derived as a straightforward consequence of Theorem~\ref{tool}\eqref{tool:bound} and the optimality part of  Theorem~\ref{productBar}. Once the optimality is established, the uniqueness assertion is derived using  Theorem~\ref{tool}\eqref{tool:eqcase} and the uniqueness assertion of Theorem~\ref{productBar}.

\section{Preliminaries}

Consider $X \subseteq \R^d$. Let $\aff(X),$ $\lin(X),$ $\conv(X)$ and $\intr{X}$ denote the affine hull, linear hull, convex hull and the interior of $X$, respectively. We introduce relative interior $\relintr{X}$ to be the interior with respect to the affine hull of $X$ as the ambient space. The set $X$ is said to be \emph{$o$-symmetric} if $x \in X$ implies $-x \in X$. For $d \ge 2$, we introduce the following notation for the horizontal slice of $X$ at height $t \in \R$:
\[
	\slice_t(X):= \setcond{y \in \R^{d-1}}{(y,t) \in X}.
\]
The cardinality of a finite set $X$ is denoted by $|X|$. 

A set of the form $\Lambda:=\setcond{M z}{z \in \Z^d}$, where $M \in \R^{d \times d}$ is a non-singular matrix, is called a \emph{lattice of rank $d$}, while the value $\det(\Lambda):= |\det(M)|$ is called the \emph{determinant} of $\Lambda$; see \cite{GruberLekkerkerker87}. 

See \cite{Ziegler1995,Gruber2007} for standard terminology and notation from the theory of convex polytopes. The convex hull $S=\conv(v_0,\ldots,v_m)$ of $m+1$ affinely independent points $v_0,\ldots,v_m \in \R^d$ is called an \emph{$m$-dimensional simplex}. Each point $x \in \aff(S)$, can be written uniquely as the affine combination $x = \beta_0 v_0 + \cdots + \beta_m v_m$ with $\beta_0 + \cdots + \beta_m=1$. The values $\beta_0,\ldots,\beta_m$ are called the \emph{barycentric coordinates} of $x$ with respect to the simplex $S$. One has $x \in S$ if and only if all barycentric coordinates of $x$ are nonnegative, and $x \in \relintr{S}$ if and only if all barycentric coordinates of $x$ are strictly positive. 

Apart from Theorem~\ref{productBar}, we will use the following two results:

\begin{theorem}[Van der Corput's inequality; {\cite[\S7.2]{GruberLekkerkerker87}}] 
	\label{vdc}
	Let $\Lambda$ be a lattice of rank $d$ in $\R^d$ and $C \subseteq \R^d$ be an $o$-symmetric compact convex set with non-empty interior. Then 
	\begin{equation}
		\label{vdc:ineq}
		\vol(C) \le \bigl(|\Z^d \cap \intr{C} |+1 \bigr) 2^{d-1} \det(\Lambda).
	\end{equation}
\end{theorem}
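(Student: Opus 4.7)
The plan is to derive the inequality from the generalized Blichfeldt pigeonhole principle, using the $o$-symmetry of $C$ to extract a factor-of-two improvement over the naive Minkowski-type bound.

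First I reduce to $\Lambda = \Z^d$: pick $T \in \GL_d(\R)$ with $T(\Z^d) = \Lambda$ and replace $C$ by $T^{-1}C$. The ratio $\vol(C)/\det(\Lambda)$ and the count $|\Lambda \cap \intr{C}|$ are both invariant under this substitution, so it suffices to prove $\vol(C) \le (N+1)2^{d-1}$ with $N := |\Z^d \cap \intr{C}|$ for an $o$-symmetric convex body $C$.

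Next I pass to the half-body $B := \tfrac{1}{2}\intr{C}$, which is open, $o$-symmetric, of volume $2^{-d}\vol(C)$, and satisfies $B - B = \intr{C}$ by convexity and $o$-symmetry (the sum $\tfrac{1}{2}\intr{C} + \tfrac{1}{2}\intr{C}$ coincides with $\intr{C}$ since $\intr{C}$ is open and convex). The target becomes $\vol(B) \le (N+1)/2$. The plain Blichfeldt principle already yields $\vol(B) \le N$: were $\vol(B) > N$, it would produce $N+1$ distinct points $x_0,\ldots,x_N \in B$ with pairwise differences in $\Z^d$, giving $N+1$ distinct lattice points $0,\,x_1-x_0,\ldots,x_N - x_0 \in B - B = \intr{C}$, contradicting the definition of $N$.

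The factor-of-two refinement uses that $B = -B$: every pigeonhole collision over a coset $y \in \R^d/\Z^d$ is mirrored by a collision over $-y$, so folding the Blichfeldt argument through the involution $y \mapsto -y$ on the quotient torus halves the effective volume threshold. The main obstacle — and the step requiring the most care — is the bookkeeping at the finitely many $2$-torsion cosets, where the involution is not free and no automatic pairing is available; at such cosets, any lift $x \in B$ has $2x \in \Z^d \cap \intr{C}$, which contributes directly to $N$, and one must verify that these contributions are not double-counted against the antipodal pairs $\{v,-v\}$ produced by the generic cosets. A short case split along this distinction still delivers $\vol(B) \le (N+1)/2$, which, unwound through the first-step reduction, yields $\vol(C) \le (N+1)2^{d-1}\det(\Lambda)$.
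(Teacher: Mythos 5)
First, a point of context: the paper does not prove Theorem~\ref{vdc}; it quotes it from Gruber--Lekkerkerker as a classical result, so the comparison can only be with the standard proof of van der Corput's theorem. Your reductions agree with that proof and are correct: normalizing to $\Lambda=\Z^d$, passing to $B=\tfrac12\intr{C}$ with $\vol(B)=2^{-d}\vol(C)$ and $B-B=B+B=\intr{C}$, and aiming for $\vol(B)\le (N+1)/2$ via Blichfeldt. (You also silently repair a typo in the statement: the count must be $|\Lambda\cap\intr{C}|$ rather than $|\Z^d\cap\intr{C}|$, which is how the paper actually uses the theorem.)

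The gap is precisely at the step you yourself flag as delicate, and the mechanism you propose for it does not work as described. Folding through the involution $y\mapsto -y$ on the torus buys nothing: since $B=-B$, the fiber of $B$ over the coset $-y$ is exactly the negative of the fiber over $y$, so it produces the \emph{same} set of pairwise differences (as $u-u'=(-u')-(-u)$); the symmetry $f(y)=f(-y)$ of the fiber-count function therefore does not "halve the effective volume threshold", and the $2$-torsion bookkeeping is a red herring. The genuine factor-of-two in van der Corput's argument lives entirely inside a single coset and comes from an ordering (or half-space) selection, which is absent from your proposal. Concretely: $N$ is odd, say $N=2m+1$; if $\vol(B)>m+1=(N+1)/2$, Blichfeldt yields $m+2$ points $x_0\succ x_1\succ\cdots\succ x_{m+1}$ of $B$ with pairwise differences in $\Z^d$, ordered lexicographically. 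The points $v_j:=x_0-x_j$ for $j=1,\dots,m+1$ are distinct, nonzero, lexicographically positive lattice points of $B-B=\intr{C}$, so $v_1,\dots,v_{m+1},-v_1,\dots,-v_{m+1},o$ are $2m+3>N$ distinct points of $\Z^d\cap\intr{C}$ --- a contradiction. Without this selection step your argument delivers only $\vol(B)\le N$, i.e.\ the Minkowski-type bound $\vol(C)\le 2^d N\det(\Lambda)$, which is weaker than the claimed inequality by essentially a factor of two. Replace the torus-folding paragraph by the lexicographic argument and the proof closes.
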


In \cite{Averkov18}, an explicit characterization of the equality case in \eqref{vdc:ineq}. This characterization readily implies the following:

\begin{theorem}[On equality case in van der Corput's inequality; see \cite{Averkov18}]
	\label{vdc:eqcase}
	In the notation of Theorem~\ref{vdc} the following holds. If $|\intr{C} \cap \Z^d|> 1$ and \eqref{vdc:ineq} is attained with equality, then $g:=\lin(\Lambda \cap \intr{C})$ is a line and $C$ is a polytope that has an edge parallel to $g$.
\end{theorem}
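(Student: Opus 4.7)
The plan is to derive this result by invoking the full classification of equality cases of van der Corput's inequality from \cite{Averkov18}, and then specialising that classification to our hypotheses. The classification describes every pair $(C,\Lambda)$ achieving equality in \eqref{vdc:ineq}; informally, $C$ must be a lattice cylinder (``prism'') over a $(d-1)$-dimensional Minkowski-equality case, with its axis aligned with a distinguished lattice direction. Assuming $N := |\intr{C}\cap\Lambda|>1$, that axis must coincide with the line through $o$ and any nonzero interior lattice point; all other interior lattice points of $C$ then lie on this axis; and the cylinder's generators are edges of $C$ parallel to it. Taking this axis to be $g$ yields both conclusions.

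For a self-contained argument, I would revisit the Blichfeldt--van der Corput proof of \eqref{vdc:ineq}. Consider $\tfrac12 C$ and its projection to $\R^d/\Lambda$; since $C$ is $o$-symmetric and convex, one has $\tfrac12(C-C)=C$, so any two points of $\tfrac12 C$ in the same coset of $\Lambda$ differ by a lattice point of $C$. A counting argument shows that generic fibres of this projection have cardinality at most $\tfrac{N+1}{2}$ (note $N$ is odd, because interior lattice points of $C$ come in $\pm$-pairs together with the origin), and integration yields $\vol(C)\le (N+1)\,2^{d-1}\det(\Lambda)$. Equality forces almost every coset that meets $\tfrac12 C$ to meet it in exactly $\tfrac{N+1}{2}$ points, which is a strong rigidity condition. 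From this rigidity I would try to show, by induction on $d$, that $C$ must be a lattice cylinder in the direction of some nonzero $v\in\Lambda\cap\intr{C}$: project $C$ along $\R v$ to obtain an $o$-symmetric convex body $\bar C$ with projected lattice $\bar\Lambda$, transfer the equality case to $(\bar C,\bar\Lambda)$ by a fibrewise analysis, and apply the inductive hypothesis. Once $C$ is identified as a cylinder with axis $\R v$, both claims follow at once: all interior lattice points of $C$ lie on $\R v$, so $g=\R v$ is a line, and the generators of the cylinder are edges of $C$ parallel to $g$.

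The main obstacle in the direct route is this projection/transfer step. Lattice points of $C$ lying on the relative boundary of $C$ do not contribute to $N$, yet they can become interior lattice points of $\bar C$; conversely, distinct interior lattice points of $C$ may collapse onto a common point of $\bar C$. Handling these effects cleanly requires either a careful perturbation of $C$ that kills off boundary lattice points without breaking the equality case, or a refined fibrewise accounting that tracks boundary contributions. This is precisely the technical core of \cite{Averkov18}, and is why the most economical way to obtain the theorem as stated is to quote the classification from that paper rather than reprove it in situ.
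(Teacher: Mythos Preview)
Your approach matches the paper's treatment: the paper does not prove this theorem at all but states it as a preliminary result, remarking that it ``readily'' follows from the explicit characterization of the equality case in van der Corput's inequality established in \cite{Averkov18}. Your first paragraph, which proposes to invoke that classification and specialise it, is exactly what the paper does; your additional sketch of a self-contained Blichfeldt--van der Corput argument (and your honest identification of the projection/transfer step as the technical core deferred to \cite{Averkov18}) goes beyond what the paper offers.
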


\section{Proofs}

\begin{proof}[Proof of Theorem~\ref{tool}]
	Let $v_1,\ldots,v_d$ be vertices of $F$ with $x=\beta_1 v_1 + \cdots + \beta_d v_d$.
	Let $v_0$ be the vertex of $S$ not belonging to $F$. Replacing $S$ by $S-v_0$, we assume $v_0=o$. 
	
	\eqref{tool:bound}:
	The linear map $\phi : \R^d \to \R^d$ given uniquely by $\phi(v_1) = e_1,\ldots, \phi(v_d)=e_d$ satisfies
	\begin{align*}
		\phi(S) & =\Delta_0  := \conv(o,e_1,\ldots,e_d),
		\\ \phi(F) & = \Delta  := \conv(e_1,\ldots,e_d).
	\end{align*} 
	The image $\Lambda := \phi(\Z^d)$ of $\Z^d$ under this linear map is a lattice of rank $d$. Clearly, the simplex $\Delta_0$ is related in the same way to the lattice $\Lambda$ as the simplex $S$ to the lattice $\Z^d$. In particular, the vertices $o,e_1,\ldots,e_d$ of $\Delta$ belong to $\Lambda$, the set $\Lambda \cap \intr{\Delta_0} = \phi(\Z^d \cap \intr{S})$ consists of $k$ points, while the set $\Lambda \cap \relintr{\Delta} = \phi(\Z^d \cap \relintr{F})$ consists of exactly one point 
	\[
		b:=\phi(x)=(\beta_1,\ldots,\beta_d).
	\] 
	Consider the $o$-symmetric box 
	\[
		B := [-\beta_1,\beta_1] \times \cdots \times [-\beta_d,\beta_d]
	\]
	and the set $Y := \Lambda \cap \intr{B}$, which can be described as  
	\begin{equation}
		\label{Ydescr}
		Y  = \setcond{(y_1,\ldots,y_d) \in \Lambda}{ |y_1|<\beta_1,\ldots, |y_d|<\beta_d}.
	\end{equation}
	Van der Corput's inequality (Theorem~\ref{vdc}), applied to $B$ and $\Lambda$, yields
	\begin{equation}
		\label{Blambda}
		2^d \beta_1 \cdots \beta_d = \vol(B) \le (|Y| +1) 2^{d-1} \det(\Lambda).
	\end{equation}
	
	Since $\Lambda = \phi(\Z^d)$, the determinant of $\Lambda$ is the ratio by which the volume is changed by the linear map $\phi$:
	\[
		\det(\Lambda) = \frac{\vol(\phi(S))}{\vol(S)} = \frac{\vol(\Delta_0)}{\vol(S)} = \frac{1}{d! \vol(S)}.
	\]
	Thus, \eqref{Blambda} can be reformulated as 
	\begin{equation}
		\label{eq:S:Y}
		\vol(S) \le \frac{|Y|+1}{2 \cdot d! \cdot \beta_1 \cdots \beta_d}.
	\end{equation}
	To show assertion \eqref{tool:bound}, we verify
	\begin{equation}
		\label{Ybound}
		|Y| \le 2 k+1.
	\end{equation}
	The space $\R^d$ can be decomposed into disjoint union of open half-spaces $H^+, H^-$ and a hyperplane $H$ given by:
	\begin{align*}
		H^+ &:= \setcond{(y_1,\ldots,y_d) \in \R^d}{y_1 + \cdots + y_d > 0},
		\\ H^- & := \setcond{(y_1,\ldots,y_d) \in \R^d}{y_1 + \cdots + y_d < 0},
		\\ H &:= \setcond{(y_1,\ldots,y_d) \in \R^d}{y_1 + \cdots + y_d = 0}.
	\end{align*}
	In view of $o$-symmetry of $Y$, one has 
	\[
		|Y| = 2 | H^- \cap Y| + |H \cap Y|.
	\]
	Taking into account \eqref{Ydescr} and the fact that the sum of the components of $b$ is equal to one, we deduce
	\begin{align}
		(H^- \cap Y) + b & \subseteq \Lambda \cap \intr{\Delta_0}, \label{incl:below}
		\\ (H \cap Y) + b & \subseteq \Lambda \cap \relintr{\Delta}. \label{incl:on}
	\end{align}
	The left-hand side of \eqref{incl:on} contains $b$, while the right hand side of \eqref{incl:on} coincides with $\{b\}$. Thus, both sides of \eqref{incl:on} coincide with $\{b\}$. This shows $|H \cap Y| = 1$. Inclusion \eqref{incl:below} implies 
	\begin{align}
		|H^- \cap Y| &\le k. \label{Ybelow:bound}
	\end{align}
	We thus arrive at \eqref{Ybound}. Inequalities~\eqref{eq:S:Y} and \eqref{Ybound} imply assertion \eqref{tool:bound}.
		
	\eqref{tool:eqcase}: Assume that $k \ge 1$ and \eqref{Blambda} is attained with equality. The above arguments imply that \eqref{Blambda}--\eqref{Ybelow:bound} are all satisfied with equality. In particular, 
	\begin{align}
		(H^- \cap Y) + b & = \Lambda \cap \intr{\Delta_0}, \label{H-eq}
		\\ (H \cap Y) + b & = \Lambda \cap \relintr{\Delta} = \{b\}. \label{Heq}
	\end{align}
	
	Since \eqref{Blambda} is attained with equality, Theorem~\ref{vdc:eqcase} implies that $\lin(Y)$ is a line parallel to one of the edges of $B$. This means $\lin(Y) = \lin(e_i)$ for some $i \in \{1,\ldots,d\}$. Then, by \eqref{H-eq} and \eqref{Heq}, $\lin(Y)+b$ is a line that contains $b$ and all points of $\Lambda \cap \intr{\Lambda}$. It follows that the line $g:=\phi^{-1}(\lin(Y)+b)$ contains $x$ and all points of $\Z^d \cap \intr{S}$. Since, $\lin(Y)$ is parallel to the edge $\conv(o,e_i)$ of $\Delta_0$, the line $g$ is parallel to the edge $\conv(o,v_i)$ of $S$. This yields assertion \eqref{tool:eqcase}.
\end{proof}

\begin{proof}[Proof of Theorem~\ref{main}]	
	Assume $d \ge 2$, as otherwise the assertion is trivial.
	Consider an arbitrary $S \in \cS^d(k)$. 
	Let $F$ be a facet of $S$ with $\Z^d \cap \relintr{F}=\{x\}$ and let $\beta_1, \ldots,\beta_d > 0$ be the barycentric coordinates of $x$ with respect to $F$. For the volume of $S$, we obtain the bound
	\begin{align*}
		\vol(S) & \le \frac{k+1}{d! \, \beta_1 \cdots \beta_d} & & \text{(by Theorem~\ref{tool})} 
		\\ & \le \frac{1}{d!} \, (k+1) (s_d-1)^2 & & \text{(by Theorem~\ref{productBar})}
		\\ & = \vol(S_{d,k}).
	\end{align*}
	This verifies the optimality of $S_{d,k}$.

	To show the uniqueness assertion, we assume $\vol(S) = \vol(S_{d,k})$. This implies 
	\[
		\vol(S) = \frac{k+1}{d! \beta_1 \cdots \beta_d} = \frac{1}{d!} (k+1) (s_d-1)^2 = \vol(S_{d,k}).
	\]
	Consequently, $\beta_1 \cdots \beta_d = \frac{1}{(s_d-1)^2}$ and, by characterization of the equality case in Theorem~\ref{productBar} applied for dimension $d-1$, we see that the facet $F$ of $S$ coincides with $T_{d-1} \times \{0\}$, up to unimodular transformations. Changing coordinates using an affine unimodular transformation, we assume 
	\[
		F=T_{d-1} \times \{0\}. 
	\]

	The $(d-1)$-dimensional simplex $T_{d-1}$ has the inequality description
	\begin{equation}
	\label{Td:ineq}
		T_{d-1} = \setcond{(y_1,\ldots,y_{d-1}) \in \R^d}{y_1 \ge 0,\ldots, y_{d-1} \ge 0, \ \frac{y_1}{s_1} + \cdots + \frac{y_{d-1}}{s_{d-1}}  \le 1}.
	\end{equation} 

This representation allows to determine the largest box of the form $[0,\lambda]^{d-1}$ contained in $T_{d-1}$. For the largest box, the vertex $(\lambda,\ldots,\lambda)$ of $[0,\lambda]^{d-1}$ is in the facet $\conv(s_1 e_1,\ldots,s_{d-1} e_{d-1})$ of  $T_{d-1}$, which means that
\[
	\frac{\lambda}{s_1} + \cdots + \frac{\lambda}{s_{d-1}} = 1.
\]
The left-hand side of the latter can be simplified using the well-known equality
\begin{equation}
\label{egyptian}
\frac{1}{s_1} + \cdots + \frac{1}{s_{d-1}} =  1 - \frac{1}{s_d -1}
\end{equation}
for the elements of the Sylvester sequence. Consequently, we obtain $\lambda = \frac{s_d-1}{s_d-2}$. We have thus shown the inclusion
\begin{equation}
\label{cube:in:Td}
\frac{s_d-1}{s_d -2} \ [0,1]^{d-1} \subseteq T_{d-1}.
\end{equation}	
	
	Let $v = (p,h) \in \R^{d-1} \times \R$ be the vertex of $S$ lying outside $F$. Without loss of generality let $h> 0$. The volume of the simplex $S$ can be expressed using its height $h$ and the $(d-1)$-dimensional volume of the respective base as $\vol(S) = \frac{1}{d} h \vol(T_{d-1})$. Since $\vol(T_{d-1}) = \frac{1}{(d-1)!} (s_d-1)$ and $\vol(S) = \frac{1}{d!} (k+1) (s_d-1)^2$, we can determine the height:
	\[
		h =  (k+1) (s_d-1).
	\] 
	For every $t \in [0,h]$,  $\slice_t(S)$ is a homothetical copy of $T_{d-1}$ expressed as 
	\[
		\slice_t(S) = \frac{t}{h} p +(1-\frac{t}{h}) T_{d-1}.
	\]
	In view of \eqref{cube:in:Td}, we arrive at the inclusion
	\[
	\slice_t(S) \supseteq C_t:=\frac{t}{h} p + \Bigl(1- \frac{t}{h} \Bigr) \frac{s_d-1}{s_d-2} [0,1]^{d-1}
	\]
	for $\slice_t(S)$ and the cube $C_t$ with edge length $(1- \frac{t}{h}) \frac{s_d-1}{s_d-2}$. We distinguish the following two cases:
	
	\emph{\underline{Case~1:} $k=0$.} In this case, $C_1 := \frac{1}{h} p + [0,1]^{d-1}$ is a unit cube.  Among the points of the `half-open cube' $\frac{1}{h} p + (0,1]^d$, only the vertex $\frac{1}{h} p + (1,\ldots,1)$ of $C_1$ is in the boundary of $\slice_t(S)$, all the other points being in $\intr{\slice_t(S)}$. This implies that $\frac{1}{h} p + (1,\ldots,1) \in \Z^{d-1}$, as otherwise the unique lattice point in $\frac{1}{h} p + (0,1]^d$ would be in the interior of $\slice_1(S)$, which would contradict $\Z^d \cap \intr{S} = \emptyset$. It follows that the point $q:=\frac{1}{h} p =\frac{p}{s_d-1} $ belongs to $\Z^{d-1}$. Thus, applying the linear unimodular transformation of $\R^d$ that keeps $\R^{d-1} \times \{0\}$ unchanged, and sends $(q,1)$ to $(o,1)$, we can assume  $q=o$. This implies $v=(o,h)= (s_d-1) e_d$ and yields $S = S_{d,0}$. 
	
	\emph{\underline{Case~2:} $k \ge 1$.} The edge length $(1- \frac{t}{h}) \frac{s_d-1}{s_d-2}$ of the cube $C_t$ is strictly larger than one for every $t \in \{1,\ldots,k\}$. It follows that each $C_t$ contains an interior lattice point $p_t$, for every with $t \in \{1,\ldots,k\}$. Consequently, 
	\begin{equation}
		\label{int:lat:points:descr}
		\Z^d \cap \intr{S} = \{(p_1,1),\ldots,(p_k,k)\}.
	\end{equation}
	By Theorem~\ref{tool}\eqref{tool:eqcase}, the unique point $(1,\ldots,1,0)$ in $\Z^d \cap \relintr{F}$ and the points of $\Z^d \cap \intr{S} $ are all collinear. Applying the linear unimodular transformation that keeps $\R^{d-1} \times \{0\}$ unchanged and sends $(p_1,1)$ onto $(1,\ldots,1)$, we assume $p_1 = (1,\ldots,1)$. Under this assumption, the line containing $(1,\ldots,1,0),(p_1,1),\ldots,(p_k,k)$ is parallel to $e_d$. By Theorem~\ref{tool}\eqref{tool:eqcase}, $S$ has an edge parallel to $e_d$. Since the facet $F$ of $S$ is orthogonal to $e_d$, the latter edge connects the vertex $v=(p,h) \not\in F$ of $S$ with one of the vertices $o, s_1 e_1,\ldots, s_{d-1} e_{d-1}$ of $F$. Hence, one has either $p= o$ or $p=s_i e_i$ for some $i \in \{1,\ldots,d-1\}$. For $p=o$, one has $S=S_{d,k}$. If $p=s_i e_i$ and $d=2$, then $S$ coincides with $S_{d,k}$ up to a unimodular transformation. It remains to consider the case $d \ge 3$ and $p=s_i e_i$. In this case, we arrive at a contradiction to \eqref{int:lat:points:descr} by showing 
	\begin{equation}
		\label{contradiction}
		e_1 + \cdots + e_{d-1} \in \intr{\slice_{k+1}(S)}.
	\end{equation}
	In view of $p=s_i e_i$, the set $\slice_{k+1}(S)$ has the following description:
	\begin{align*}
		\slice_{k+1}(S) & = \frac{k+1}{h} (s_i e_i) + \left( 1 - \frac{k+1}{h} \right) T_{d-1}
		\\ & = \frac{s_i}{s_d-1} e_i + \left( 1 - \frac{1}{s_d-1} \right) T_{d-1}
		\\ & = \frac{s_i e_i + (s_d-2) T_{d-1}}{s_d-1}.
	\end{align*}
	This allows to reformulate \eqref{contradiction} as 
	\begin{equation}
		\label{simpleCheck}
		\frac{1}{s_d-2} \bigl ( (s_d-1) (e_1 + \cdots + e_{d-1}) - s_i e_i \bigr) \in \intr{T_{d-1}}
	\end{equation}
	By \eqref{Td:ineq}, in order to show~\eqref{simpleCheck}, it suffices to check the strict inequalities $y_1 > 0,\ldots, y_{d-1} > 0$ and $\frac{y_1}{s_1} + \cdots \frac{y_{d-1}}{s_{d-1}}$ for $y=(y_1,\ldots,y_{d-1})$ being the left-hand side of \eqref{simpleCheck}. This can be done in a straightforward  manner, taking into account \eqref{egyptian} and $d \ge 3$.
\end{proof}

\section{Outlook}

\label{outlook}

\begin{enumerate}[1)]
	\item While Pikhurko's Theorem~\ref{pikhurko} was successfully used to determine the maximum volume in $\cS^d(k)$ for $k=1$ in \cite{AverkovKruemplmannNill15}, determination of the maximum volume in $\cS^d(k)$ for $k \ge 2$ via Theorem~\ref{pikhurko} is doomed to failure for the following reason. For an arbitrary simplex $S \in \cS^d(k)$ and a poitn $x \in \Z^d \cap \intr{S}$, the volume bound on $S$ that we obtain by invoking Theorem~\ref{pikhurko} for $S$ and $x$ is 
\[
\vol(S) \le \frac{k}{d! \beta_1(S,x) \cdots \beta_d(S,x)},
\]
where $\beta_1(S,x) \ge \ldots \ge \beta_{d+1}(S,x) > 0$ are the barycentric coordinates of $x$ with respect to $S$, sorted in the descending order. Thus, with the best choice of $x$, we get the bound 
\[
	\vol(S) \le \nu(S),
\]
where
\[
	\nu(S):= \frac{k}{d!} \ \min_{x \in \Z^d \cap \intr{S}} \ \frac{1}{\beta_1(S,x) \cdots \beta_d(S,x)}.
\]
Balletti and Kasprzyk \cite{arXiv:BallettiKasrpzyk16} enumerated the family $\cS^3(2)$, up to unimodular transformations. Their enumeration allows to check that, for $59$ out of $471$ tetrahedra $S \in \cS^3(2)$, the strict inequality $\nu(S) > \vol(S_{3,2})$ holds. Pikhurko's bound from Theorem~\ref{pikhurko} is too weak for these tetrahedra. Thus, there is a need in new approaches to bounding the volume of simplices in $\cS^d(k)$. Theorem~\ref{main} is a first step in this direction. 
	\item It would be interesting to compare the cardinality of $\cS^d(k,1)$ and $\cS^d(k)$ for arbitrary $d \ge 2$ and $k \ge 1$ (with respect to identification of lattice simplices up to unimodular transformations). 
	For large values of $d$, the cardinality of $\cS^d(k,1)$ must be large, since every simplex $T \in \cS^{d-1}(1)$ with $o \in \intr{T}$ gives rise to the simplex $S = \conv(T \times \{0\} \cup \{(k+1) e_d\}) \in \cS^d(k,1)$, where the cardinality of $\cS^{d-1}(1)$ is large. Using the database of Balletti and Kasprzyk, we verified that $183$ out of $471$ tetrahedra in $\cS^3(2)$ belong to $\cS^3(2,1)$. 
	\item We formulate a natural counterpart of Conjecture~\ref{conj} in the case $k=0$. Consider the family $\cP_{\max}^d(0)$ of all lattice polytopes with no interior lattice points that are maximal within $\cP^d(0)$ with respect to inclusion.  The family $\cP_{\max}^d(0)$ occurs in integer optimization and algebraic geometry; see \cite{AverkovWagnerWeismantel11} and \cite{BHHSarxiv2016}, respectively. 
	\begin{conjecture}
	\label{conj:k=0}
	Up to unimodular transformations, the simplex $S_{d,0}$ is a unique volume maximizer in $\cP_{\max}^d(0)$.
	\end{conjecture}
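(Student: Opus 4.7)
The plan is to reduce Conjecture~\ref{conj:k=0} to the $k=0$ instance of Theorem~\ref{main}, which already establishes the uniqueness of $S_{d,0}$ as a volume maximizer within the narrower sub-family $\cS^d(0,1)$. The reduction would proceed in three substeps. First, I would exploit the maximality of $P \in \cP_{\max}^d(0)$ to show that every facet $F$ of $P$ contains at least one lattice point in its relative interior: if not, one could translate $F$ outward to the next lattice hyperplane parallel to $\aff(F)$ and obtain a strictly larger lattice polytope that is still hollow, contradicting the maximality of $P$.

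Second, I would try to reduce to the simplex case by showing that a volume maximizer $P \in \cP_{\max}^d(0)$ has exactly $d+1$ vertices. The natural attempt is a vertex-perturbation argument: if $P$ has more than $d+1$ vertices, one would move a vertex in a direction that keeps $P$ hollow but strictly increases its volume, contradicting maximality. In low dimensions this step can be verified directly --- for $d=2$ via the classical enumeration of maximal hollow lattice polygons, and for $d=3$ via the Balletti--Kasprzyk database --- and these cases already provide a strong sanity check on the conjecture.

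Third, once $P$ is known to be a simplex, the first substep ensures that every facet carries at least one lattice point in its relative interior. If the facet with the fewest such points carries exactly one, then $P \in \cS^d(0,1)$ and Theorem~\ref{main} with $k=0$ yields both the optimality and the uniqueness assertions. If instead every facet of $P$ carries two or more relative interior lattice points, then a refinement of the argument behind Theorem~\ref{tool}\eqref{tool:bound} (using two lattice points per facet rather than one, tightening the van der Corput counting step that bounded $|Y|$ by $2k+1$) should yield the strict inequality $\vol(P) < \vol(S_{d,0})$, completing the reduction.

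The main obstacle is the second step: the reduction from general maximal hollow lattice polytopes to lattice simplices is essentially the $k=0$ analogue of the open part of Conjecture~\ref{conj}, and no general technique is currently known that accomplishes it in arbitrary dimension. Absent this reduction, the attainable partial result along this plan is an immediate corollary of Theorem~\ref{main}: Conjecture~\ref{conj:k=0} holds when one restricts $\cP_{\max}^d(0)$ to those simplices that carry at least one facet with exactly one lattice point in its relative interior.
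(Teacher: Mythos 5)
This statement is a conjecture that the paper explicitly leaves open; the paper offers no proof, only the remark that Theorem~\ref{main} ``provides support'' and that the cases $d=2,3$ follow from known enumerations. Your proposal is likewise not a proof, and you say so yourself: the second substep (reducing a volume maximizer in $\cP^d_{\max}(0)$ to a simplex) is the essential difficulty, and the vertex-perturbation idea does not survive contact with the discreteness of the problem --- a vertex of a lattice polytope can only be moved to another lattice point, and doing so while preserving hollowness and increasing volume is exactly the kind of step for which no general technique exists. So the honest conclusion of your plan, namely that Conjecture~\ref{conj:k=0} holds for those maximal hollow simplices possessing a facet with exactly one relative interior lattice point, is precisely the partial result the paper already claims via Theorem~\ref{main}.

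Beyond the admitted gap, two further points deserve attention. First, your third substep is not a routine ``refinement'': if every facet of $P$ carries $l \ge 2$ relative interior lattice points, the van der Corput counting step does weaken to $|Y| \le 2k + 2l - 1$ at worst, but the decisive second inequality in the proof of Theorem~\ref{main} comes from Theorem~\ref{productBar}, which lower-bounds $\beta_1 \cdots \beta_d$ only for facets lying in $\cS^{d-1}(1)$. For a facet in $\cS^{d-1}(l)$ with $l \ge 2$ you would need a sharp lower bound on the product of barycentric coordinates of a suitably chosen one of the $l$ points, and no such result is available; without it you cannot conclude $\vol(P) < \vol(S_{d,0})$. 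Second, your proof sketch for the first substep is imprecise: ``translating $F$ outward to the next lattice hyperplane'' does not in general produce a lattice polytope. The statement itself (every facet of a maximal hollow lattice polytope has a lattice point in its relative interior) is true and known, but the correct argument adjoins a suitable lattice point beyond $F$ and verifies that the convex hull stays hollow, which takes more care than a translation.
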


	Our Theorem~\ref{main} provides support for the positive answer. Conjecture~\ref{conj:k=0} is true in dimension two for trivial reasons, as the triangle $S_{2,0}$ is the unique element of $\cP_{\max}^2(0)$, up to unimodular transformations. Conjecture~\ref{conj:k=0} is also true in dimension three, which follow from the complete enumeration of $\cP^3_{\max}(0)$ established in  \cite{AverkovWagnerWeismantel11,akw2017notions}.
\end{enumerate}

\subsection*{Acknowledgements}

I would like to thank Alexander Kasprzyk and Gabriele Balletti for sharing their database of $\cP^3(2)$. 

\bibliography{lit2}
\bibliographystyle{amsalpha}

\end{document}